\theoremstyle{plain}
\newtheorem{proposition}{Proposition}
\newtheorem{corollary}[proposition]{Corollary}
\newtheorem{theorem}[proposition]{Theorem}
\newtheorem{lemma}[proposition]{Lemma}
\newtheorem{definition}[proposition]{Definition}
\theoremstyle{definition}
\newtheorem{example}[proposition]{Example}
\newtheorem{remark}[proposition]{Remark}
\theoremstyle{remark}
\newtheorem{question}{Question}
\title{Modal operators and toric ideals}
\author{Riccardo Camerlo}
\address{Dipartimento di matematica, Universit\`a di Genova, Via Dodecaneso 35, 16146 Genova --- Italy}
\email{camerlo@dima.unige.it}
\author{Giovanni Pistone}
\address{de Castro Statistics, Collegio Carlo Alberto, Piazza Vincenzo Arbarello 8, 10122 Torino --- Italy}
\email{giovanni.pistone@carloalberto.org}
\author{Fabio Rapallo}
\address{Dipartimento di scienze e innovazione tecnologica, Universit\`a del Piemonte Orientale, Viale Teresa Michel 11, 15121 Alessandria --- Italy}
\email{fabio.rapallo@uniupo.it}
\subjclass[2010]{Primary 03B45, 13P25. Secondary 13P10, 62H17}
\keywords{Kripke frame; polynomial ring; binomial ideal; Gr\"obner basis; symbolic software}
\date{}
\DeclareMathOperator{\Kripke}{\mathcal K}
\DeclareMathOperator{\Fun}{\mathcal F}
\DeclareMathOperator{\Ideal}{Ideal}
\DeclareMathOperator{\Elim}{Elim}
\DeclareMathOperator{\Ker}{Ker}
\DeclareMathOperator{\Necessarly}{\Box}
\DeclareMathOperator{\Possibly}{\lozenge}
\DeclareMathOperator{\pr}{pr}
\DeclareMathOperator{\Prop}{\mathfrak P}
\newcommand{\setof}[2]{\left\{#1 \middle| #2 \right\}}
\newcommand{\set}[1]{\left\{#1\right\}}
\newcommand{\elimof}[2]{\Elim\left(#1,#2\right)}
\newcommand{\kerof}[1]{\Ker\left(#1\right)}
\newcommand{\N}{\mathbb{N}}
\newcommand{\Z}{\mathbb{Z}}
\newcommand{\C}{\mathbb{C}}
\newif\if@borderstar
\def\bordermatrix{\@ifnextchar*{%
\@borderstartrue\@bordermatrix@i}{\@borderstarfalse\@bordermatrix@i*}%
}
\def\@bordermatrix@i*{\@ifnextchar[{\@bordermatrix@ii}{\@bordermatrix@ii[() ]}}
\def\@bordermatrix@ii[#1]#2{%
\begingroup
\m@th\@tempdima8.75\p@\setbox\z@\vbox{%
\def\cr{\crcr\noalign{\kern 2\p@\global\let\cr\endline }}%
\ialign {$##$\hfil\kern 2\p@\kern\@tempdima & \thinspace %
\hfil $##$\hfil && \quad\hfil $##$\hfil\crcr\omit\strut %
\hfil\crcr\noalign{\kern -\baselineskip}#2\crcr\omit %
\strut\cr}}%
\setbox\tw@\vbox{\unvcopy\z@\global\setbox\@ne\lastbox}%
\setbox\tw@\hbox{\unhbox\@ne\unskip\global\setbox\@ne\lastbox}%
\setbox\tw@\hbox{%
$\kern\wd\@ne\kern -\@tempdima\left\@firstoftwo#1%
\if@borderstar\kern2pt\else\kern -\wd\@ne\fi%
\global\setbox\@ne\vbox{\box\@ne\if@borderstar\else\kern 2\p@\fi}%
\vcenter{\if@borderstar\else\kern -\ht\@ne\fi%
\unvbox\z@\kern-\if@borderstar2\fi\baselineskip}%
\if@borderstar\kern-2\@tempdima\kern2\p@\else\,\fi\right\@secondoftwo#1 $%
}\null \;\vbox{\kern\ht\@ne\box\tw@}%
\endgroup}
\begin{document}

\begin{abstract}
In the present paper we consider modal propositional logic and look for the constraints that are imposed to the propositions of the special type $\Necessarly a$ by the structure of the relevant finite Kripke frame. We translate the usual language of modal propositional logic in terms of notions of commutative algebra, namely polynomial rings, ideals, and bases of ideals. We use extensively the perspective obtained in previous works in Algebraic Statistics. We prove that the constraints on $\Necessarly a$ can be derived through a binomial ideal containing a toric ideal and we give sufficient conditions under which the toric ideal, together with the fact that the truth values are in $ \set{0,1} $, fully describes the constraints.
\end{abstract}

\maketitle

\section{Introduction}

Propositional Modal Logic extends propositional logic by adding two operators, $\Necessarly$ and $\Possibly$.
Given a proposition $p$, one can form the propositions:
\begin{itemize}
\item[ ] $\Necessarly p$, which can be read ``necessarily $p$''; and
\item[ ] $\Possibly p$, which can be read ``possibly $p$''.
\end{itemize}
One of the two operators can be taken as primitive and the other as defined, setting
\begin{equation} \label{duality}
\Possibly p = \neg \Necessarly \neg p
\qquad \text{or} \qquad
 \Necessarly p = \neg \Possibly \neg p \ .
\end{equation}
We refer the reader to \cite{blackburn} as a basic text in modal logic.

S.A. Kripke \cite{kripke:1963} has provided a semantics for modal logic consisting in fixing a set of \emph{possible worlds} and a binary relation specifying which worlds $w'$ are \emph{accessible} from a given world $w$.

Let $ \Prop $ be the set of modal formulas, built starting with a given set of propositional variables.

\begin{definition} \label{defkripke}
\begin{itemize}
\item A \emph{Kripke frame} is a pair $\Kripke = (W,\mathcal E)$ where $W$, called the \emph{universe} of $ \Kripke $, is a non-empty set of \emph{worlds} and $\mathcal E$ is a binary relation on $W$.
\item A Kripke frame $ \Kripke =(W, \mathcal E )$ is \emph{locally finite} if for every $w\in W$ the set
\[
\setof{w'\in W}{(w,w')\in \mathcal E }
\]
is finite.
\item A \emph{subframe} of the Kripke frame $ \Kripke =(W, \mathcal E )$ is a Kripke frame $ \Kripke'=(W', \mathcal E')$ such that $W'\subseteq W, \mathcal E'= \mathcal E \cap (W'\times W')$.
\end{itemize}
\end{definition}

\begin{definition}
A \emph{Kripke model} $ \Kripke_{\Phi }=(W, \mathcal E ,\Phi )$ is a Kripke frame $ \Kripke =(W, \mathcal E )$ endowed with a function $\Phi $ from $ \Prop \times W$ to the Boolean algebra $\{ 0,1\} $, assigning a truth value $\Phi (p,w)$ --- that can be either $0$ (false) or $1$ (true) --- at each world $w$ for each proposition $p$.
Such an assignment must satisfy the following conditions:
\begin{itemize}
\item[($\neg $)] $\Phi (\neg p,w)=1-\Phi (p,w)$
\item[($\wedge$)] $\Phi (p\wedge q,w)=\Phi (p,w)\Phi (q,w)$
\item[($ \Necessarly $)] $\Phi ( \Necessarly p,w)=\prod_{(w,w')\in \mathcal E }\Phi (p,w')$
\end{itemize}
From these equations one can recover the conditions on the remaining logical symbols $\vee, \rightarrow ,\leftrightarrow , \Possibly $ (see also Tab. \ref{table:logical-operator}).
\end{definition}
\begin{table}
\begin{equation*}
\begin{array}[t]{|c|c|}
\hline
\neg a & 1-a \\ a \wedge b & ab \\ a \vee b & a + b - ab \\ a \rightarrow b & 1 - a +ab \\ a \leftrightarrow b & 1 -a - b + 2ab \\
\hline
\end{array}
\end{equation*}
\caption{\label{table:logical-operator} Algebraic translation of logical operators}
\end{table}

In the notation of Mathematical Logic, the equation $\Phi (p,w)=1$ is often written $( \Kripke_{\Phi },w)\Vdash p$.
If $( \Kripke_{\Phi },w)\Vdash p$ for all $w\in W$, then one can write $ \Kripke_{\Phi }\Vdash p$; if $ \Kripke_{\Phi }\Vdash p$ for all possible $\Phi $, this is denoted $ \Kripke \Vdash p$.

Notice that any fixed Kripke model $ \Kripke_{\Phi }$ determines, for every $p\in \Prop $, a function $W\to\{ 0,1\} $ that takes value $1$ if and only if $( \Kripke_{\Phi },w)\Vdash p$: this is the characteristic function --- or indicator function in the probabilistic and statistical literature, where {\it characteristic function} has a different meaning --- of the truth set of $p$.
With a slightly abusive notation, when the Kripke model $ \Kripke_{\Phi }$ is understood, one can denote such a function with the same symbol $p$ as the proposition.
For instance, the proposition $\Necessarly p$ is true in a world $w$ if, and only if, $p$ is true in any world $w'$ which is accessible from $w$.
So, its truth value is
\begin{equation}\label{eq:def}
  \Necessarly p(w) = \prod_{(w,w') \in \mathcal E} p(w')\ .
\end{equation}
Notice also that different propositions $p,q$ may give rise to the same function $W\to\{ 0,1\} $: this happens exactly when $ \Kripke_{\Phi }\Vdash p\leftrightarrow q$.

Eq.~\eqref{eq:def} defines $ \Necessarly $ as a function (in fact, a morphism --- see Proposition \ref{boxmorphism} below) from the monoid $\{ 0,1\}^W$ (endowed with the operation of pointwise multiplication) to itself.
As the function $ \Necessarly $ depends on the Kripke frame $ \Kripke $, it should be denoted by $ \Necessarly_{ \Kripke }$; however, we drop subscript $ \Kripke $ unless there is more than one Kripke model at stake.

Since the elements of $\{ 0,1\}^W$ are the characteristic functions of subsets of $W$, function $ \Necessarly $ can also be viewed as a function of the monoid $ \mathcal P (W)$, the powerset of $W$ endowed with the operation of intersection, into itself, defined by
\[
w\in \Necessarly A\Leftrightarrow\forall w'\in W\ (w \mathcal E w'\Rightarrow w'\in A)\ .
\]

In the present paper we discuss some properties of the operator $ \Necessarly $, with special reference to the tools of Polynomial Commutative Algebra that are used in Algebraic Statistics. See \cite{pistone|riccomagno|wynn:2001} for a general reference. Such an approach is suggested by the very form of Eq.~\eqref{eq:def}.

In fact, the probability distribution of a random variable $X$ with values in a finite set with $K$ points is given by an (unnormalized) probability function that is, a vector of non-negative real numbers $q=(q_1, \ldots , q_K)$. The (normalized) probability function is obtained by dividing by $\sum_k q_k$. In many classical statistical applications e.g., in the framework of multivariate categorical data, a class of statistical models which is widely used to conveniently parametrize probability distributions is the class of log-linear models, see e.g. the monograph~\cite{Fienberg}. Assuming strictly positive probabilities, an important case of log-linear model is
\begin{equation}\label{loglin}
\log(q) = A\theta
\end{equation}
where $A$ is a $K \times H$ integer valued matrix, the so-called model matrix, and $\theta$ is a vector of parameters of length $H$. Exponentiating Eq.~\eqref{loglin}, one obtains
\begin{equation}\label{expon}
q=\zeta^A \qquad \mbox{i.e.} \qquad q_k= \prod_h \zeta_h^{A_{k,h}}, \qquad k=1, \ldots, K
\end{equation}
where $\zeta=(\zeta_1, \ldots, \zeta_H)=(\exp(\theta_1),\ldots,\exp(\theta_H))$ is a vector of new positive parameters. Notice that in Eq.~\eqref{expon} the strict positivity of the $q_k$'s is not required and hence the latter equation defines an extension of the former. In particular, Eq.~\eqref{expon} makes sense for 0-1-valued $q$'s and for a model matrix equal to an adjacency matrix of a graph.

The monomial expression in Eq.~\eqref{expon} can be used to derive the implicit equations on the $q$'s implied by the model, by eliminating the $\zeta$ variables from the polynomial system. It is a well known fact in polynomial algebra that the elimination of the $\zeta$ variables in Eq.~\eqref{expon} leads to a special class of ideals, namely the toric ideal ${\mathcal I}_A$ associated to the matrix $A$. In fact, toric ideals, which in turn are a special kind of binomial ideals, are among the most prominent ingredients of Algebraic Statistics.
For this theory, we refer to \cite[Ch. 4]{sturmfels:1996}.

Although the variables of the toric ideal are actually the probabilities $q_1, \ldots , q_k$, nevertheless the ideal ${\mathcal I}_A$ can be considered in the polynomial ring ${\mathbb C}[q_1, \ldots , q_k]$.
The central observation here is that Eq.~\eqref{expon} has the same monomial structure as the operator $ \Necessarly $ in Eq.~\eqref{eq:def}. Thus, the idea we want to develop in this paper is that some techniques from Algebraic Statistics can be applied to operators coming from Modal Logic.

\subsection{Overview of the paper}
In section \ref{sec:kripkeframe} we characterize when $ \Necessarly $ is an isomorphism of the monoid $\{ 0,1\}^W$ (Theorem \ref{mainthm}).
Notice that $ \mathrm{range} ( \Necessarly )$, the range of operator $ \Necessarly$, being contained in $\{ 0,1\}^W$ can be viewed as a subset of the affine space $ \C^K$ in the case the Kripke frame is finite and has cardinality $K$.
Thus, applying the aforementioned arguments, in section \ref{sec:commutative-algebra} we describe an algebraic method to obtain binomial equations for $ \mathrm{range} ( \Necessarly )$ as a subvariety of $ \C^K$ (Theorem \ref{mainproposition}).

In the general case, the set of such equations can be fairly complicated.
In contrast, we give in Definition \ref{deftame} a notion of \emph{tameness} that amounts to a substantial simplification of this set of equations; in Definition \ref{defcut} we isolate an interesting class of tame Kripke frames.
We also study some algebraic properties of this notion of tameness.

In section \ref{examples} we deal with examples: with them we demonstrate how the approach through binomial ideals can be implemented using symbolic software for the explicit determination of the equations for $ \mathrm{range} ( \Necessarly )$.
Finally, in section \ref{fraq} we discuss the results obtained in the paper and offer a list of questions that remain unanswered and that could lead to further research in the subject.

\section{Operator $ \Necessarly $ as a morphism} \label{boxmorph}
\label{sec:kripkeframe}
We denote by $ \Fun (W)$ the set of all complex-valued functions on $W$.
So $\{ 0,1\}^W$ is a subset of $ \Fun (W)$, namely it is the set of those functions $a$ such that $a^2=a$.

Given a Kripke frame $ \Kripke =(W, \mathcal E )$, the \emph{adjacency matrix} of $ \Kripke $ is the matrix $E:W\times W\to \set{0,1} $ such that $w \mathcal E w'$ if, and only if, $E(w,w')=1$. Each $w \in W$ has a set of neighbors $N(w) = \setof{w' \in W}{E(w,w')=1}$: we call this set the \emph{neighborhood} of $w$.

Eq.~\eqref{eq:def}, together with Eq.~\eqref{duality}, defines modal operators on $ \set{0,1}^W$.
However, when $ \Kripke $ is locally finite, such a definition extends to the entire set of functions $ \Fun (W)$.

\begin{definition}[Modal operators on complex-valued functions] \label{def:modal}
If $ \Kripke $ is locally finite, we define the operators $\Necessarly \colon \Fun (W) \to \Fun (W)$ and $\Possibly \colon \Fun (W) \to \Fun (W) $ by
  \begin{equation} \label{eq:square}
   \Necessarly a(w)= \prod_{w' \in N(w)} a(w') \qquad
\text{and} \qquad
    \Possibly a(w)=1- \Necessarly (1-a)(w)\ .
  \end{equation}
  \end{definition}

  Consider the adjacency matrix $E$ of the Kripke frame. We can write \eqref{eq:square} as
  \begin{equation}\label{eq:modalistoric}
    \Necessarly a(w) = \prod_{w' \in W} a(w')^{E(w,w')} \ .
  \end{equation}

\begin{proposition} \label{boxmorphism}
  The modal operator $\Necessarly$ is a homomorphism of the multiplicative monoid $ \set{0,1}^W$.
If $ \Kripke $ is locally finite, then it extends to a homomorphism of the multiplicative monoid $ \Fun (W) $.
\end{proposition}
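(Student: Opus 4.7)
The plan is to verify directly the two axioms of a monoid homomorphism: preservation of the identity and preservation of the multiplication, working pointwise at each $w \in W$ using Eq.~\eqref{eq:modalistoric}.

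First I would address the question of whether $\Necessarly a(w) = \prod_{w' \in N(w)} a(w')$ is even well defined. For $a \in \set{0,1}^W$ the product takes value $1$ when $a$ is identically $1$ on $N(w)$ and $0$ otherwise, so it is meaningful even when $N(w)$ is infinite, with the empty-product convention giving $\Necessarly a(w) = 1$ when $N(w) = \emptyset$. For general $a \in \Fun(W)$, however, an infinite product of complex numbers need not converge, and this is precisely why the extension to $\Fun(W)$ is stated under the locally finite hypothesis: each $N(w)$ is finite so each product is a finite product of complex numbers.

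Next I would check preservation of the multiplicative identity. The constant function $\mathbf 1 \equiv 1$ is the identity of both monoids, and
\[
\Necessarly \mathbf 1(w) = \prod_{w' \in N(w)} 1 = 1,
\]
for every $w \in W$, with the understanding that the empty product (when $N(w) = \emptyset$) is also $1$.

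Then I would check multiplicativity. Given $a,b$ in either $\set{0,1}^W$ or (in the locally finite case) in $\Fun(W)$, for every $w \in W$ one computes
\[
\Necessarly(ab)(w) = \prod_{w' \in N(w)} a(w')b(w') = \left(\prod_{w' \in N(w)} a(w')\right)\left(\prod_{w' \in N(w)} b(w')\right) = \Necessarly a(w) \cdot \Necessarly b(w),
\]
where the middle equality uses the fact that multiplication in $\set{0,1}$ or in $\C$ is commutative and associative, and the rearrangement is legitimate because the product is either finite (locally finite case, or finite $W$) or essentially trivial (taking values in $\set{0,1}$ with the convention above).

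There is no real obstacle here; the only thing to be careful about is the legitimacy of interchanging and splitting products in the non-finite but $\set{0,1}$-valued case, which follows from a simple case analysis (both products are $1$ iff both $a$ and $b$ are identically $1$ on $N(w)$, iff $ab$ is identically $1$ on $N(w)$). The operator $\Possibly$ is then automatically a morphism of the dual monoid via the defining relation in Eq.~\eqref{eq:square}, but this is not part of the claim and need not be proved here.
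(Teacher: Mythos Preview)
Your proposal is correct and follows essentially the same approach as the paper: verify $\Necessarly 1 = 1$ and then check multiplicativity pointwise by splitting the product $\prod_{w'\in N(w)} a(w')b(w')$ into $\prod a(w')\cdot\prod b(w')$. Your write-up is simply more explicit about well-definedness and the infinite $\set{0,1}$-valued case, but the argument is the same.
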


\begin{proof}
In fact, $\Necessarly 1 = 1$ and
\begin{multline*}
  \Necessarly (a\cdot b)(w) = \prod_{w'\in W} (a(w')b(w'))^{E(w,w')} = \\ \prod_{w'\in W} a(w')^{E(w,w')}\prod_{w'\in W} b(w')^{E(w,w')} = (\Necessarly a\cdot \Necessarly b) (w) \ .
\end{multline*}
\end{proof}

\begin{remark} \label{movedrmk}
Notice that in $ \set{0,1}^W$ the product operation $\cdot $ can be extended to an infinitary operation $\bigwedge $, setting for any $I\subseteq \set{0,1}^W$ and any $w\in W$,
\[
\bigwedge_{a\in I}a(w)=\min \set{a(w)}_{w\in W}.
\]
Then $ \Necessarly : \set{0,1}^W\to \set{0,1}^W$ preserves this operation, as
\begin{multline} \label{neweqn}
\Necessarly (\bigwedge_{a\in I}a)(w)= \min \setof{\bigwedge_{a\in I}a(w')}{w \mathcal E w'}=\min \setof{a(w')}{a\in I,w \mathcal E w'}= \\
\min \set{ \Necessarly a(w)}_{a\in I}=\bigwedge_{a\in I} \Necessarly a(w).
\end{multline}

In fact, $ \set{0,1}^W$ carries also a partial order $\leq $ defined by letting $a\le b\Leftrightarrow\forall w\in W\ a(w)\le b(w)\Leftrightarrow a\wedge b=a$.
The operator $ \Necessarly $ satisfies the following properties concerning this partial order:
\begin{itemize}
\item $a\le b\Rightarrow \Necessarly a\le \Necessarly b$
\item $ \mathcal E \subseteq \mathcal E'\Rightarrow\forall a\in \set{0,1}^W\ \Necessarly_{(W, \mathcal E')}a\le \Necessarly_{(W, \mathcal E )}a$
\item For any $b$ in the range of $ \Necessarly $, there is a $\le $-least $a$ such that $ \Necessarly a=b$, namely $a=\bigwedge_{a'\in \Necessarly^{-1}( \set{b} )}a'$.
\end{itemize}
\end{remark}

\begin{definition}
Let $ \Kripke =(W, \mathcal E )$ be a Kripke frame.
\begin{itemize}
\item A \emph{cycle} in $ \Kripke $ is a finite subframe $( \set{x_0,\ldots ,x_n} , \mathcal E')$ such that $x_0 \mathcal E'\ldots \mathcal E'x_n \mathcal E'x_0$, and the relation $ \mathcal E'$ does not hold for any other pair of elements of $ \set{x_0,\ldots ,x_n} $ (notice that for $n=0$ this means $x_0 \mathcal E x_0$, i.e., every loop is a cycle).
\item A \emph{line} in $ \Kripke $ is a subframe $(\{ x_i\}_{i\in \Z }, \mathcal E')$ such that $\forall i,j\in \Z \ (x_i \mathcal E'x_j\Leftrightarrow j=i+1)$ (in particular, lines are infinite and do not contain cycles).
\end{itemize}
\end{definition}

We point out that this definition of a cycle in a frame $ \Kripke $ is more restrictive than the usual definition of cycles for directed graphs: by asking that the cycle is a subframe, we require that the only edges in $ \Kripke $ between the elements of the cycle are the edges in the cycle itself.

We are now able to show that $ \Necessarly $ is an isomorphism if and only if the Kripke frame is a disjoint union of its cycles and lines.

\begin{theorem} \label{mainthm}
Let $\{ (W_i, \mathcal E_i)\}_{i\in I}$ be the collection of all cycles and lines of the Kripke frame $ \Kripke =(W, \mathcal E )$.
Then modal operator $ \Necessarly : \set{0,1}^W\to \set{0,1}^W$ is an isomorphism if and only if:
\begin{itemize}
\item $W=\bigcup_{i\in I}W_i$ and this is a disjoint union; and
\item $ \mathcal E =\bigcup_{i\in I} \mathcal E_i$ and this is a disjoint union.
\end{itemize}
\end{theorem}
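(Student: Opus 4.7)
The plan is to reformulate the statement as one about a permutation of $W$: I will show that $ \Necessarly $ is a monoid isomorphism exactly when every $w\in W$ has a single out-neighbor and the resulting ``successor'' function is a bijection of $W$, and then read the announced decomposition of $ \Kripke $ into cycles and lines from the orbit decomposition of that bijection.

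Direction ($\Leftarrow$) is the easier one. If $W=\bigcup_i W_i$ and $ \mathcal E =\bigcup_i \mathcal E_i$ are disjoint unions of cycles and lines, each $w\in W$ has exactly one out-neighbor, its ``successor'' $f(w)$ inside the unique component containing it. The function $f\colon W\to W$ acts as a cyclic permutation on each cycle and as the shift on each line, hence is a bijection of $W$. Eq.~\eqref{eq:modalistoric} then reduces to $ \Necessarly a=a\circ f$, which is a bijection of $ \set{0,1}^W$ and is a monoid homomorphism by Proposition \ref{boxmorphism}.

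For direction ($\Rightarrow$), assume $ \Necessarly $ is a monoid isomorphism. The key observation is that on $ \set{0,1} $ multiplication coincides with the meet of the Boolean lattice, so the monoid structure of $ \set{0,1}^W$ is nothing but its (bounded) meet-semilattice structure. Hence $ \Necessarly $ is a meet-semilattice isomorphism, preserves the induced partial order in both directions, and restricts to a bijection of the set of co-atoms. These co-atoms are precisely the functions $c_w$ with $c_w(w)=0$ and $c_w(v)=1$ for $v\neq w$, one for each $w\in W$. A direct computation from Eq.~\eqref{eq:modalistoric} gives
\[
\Necessarly c_w(v)=0 \ \Longleftrightarrow\ w\in N(v) ,
\]
so $ \Necessarly c_w$ is a co-atom if and only if $N^{-1}(w):=\setof{v\in W}{w\in N(v)}$ has exactly one element. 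Writing $N^{-1}(w)= \set{g(w)} $, the bijectivity of $ \Necessarly $ on co-atoms forces $g\colon W\to W$ to be a bijection. Unwinding definitions, $w\in N(v) \Leftrightarrow v=g(w) \Leftrightarrow w=g^{-1}(v)$, whence $N(v)= \set{f(v)} $ for every $v\in W$, where $f:=g^{-1}$.

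It then remains to read off the decomposition from the bijection $f$. The cyclic group $\langle f\rangle $ partitions $W$ into orbits, each of which is either finite (with $f$ permuting it cyclically) or infinite and in bijection with $\Z $ so that $f$ corresponds to the successor map $n\mapsto n+1$, by a standard argument on $\Z $-actions; these are, respectively, the cycles and lines. Since every edge of $ \mathcal E $ has the form $(v,f(v))$, the edge relation restricts on each orbit to exactly the cycle or line relation on it, with no edges between distinct orbits, giving both disjoint union assertions. The pivotal step --- the one I expect to need the most attention --- is the identification of the co-atoms of $ \set{0,1}^W$ together with the computation of $ \Necessarly c_w$; once it yields $|N^{-1}(w)|=1$, the rest is routine.
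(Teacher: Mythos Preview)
Your argument is correct. The backward direction matches the paper's; the forward direction is genuinely different and, in my view, cleaner.

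The paper does not invoke the lattice structure. Instead it builds the successor and predecessor maps by hand: from surjectivity it shows every $w$ has an out-neighbor, and then that some out-neighbor $S(w)$ has $w$ as its \emph{only} in-neighbor (else the co-atom at $w$ would not be in the range); from injectivity it dually produces a $P(w)$ with $w$ as its only out-neighbor; finally it checks $PS=SP=\mathrm{id}$ to force uniqueness of out- and in-neighbors. Your route compresses all of this into a single observation: the monoid $(\{0,1\}^W,\cdot,1)$ is the bounded meet-semilattice, so a monoid automorphism is an order automorphism fixing the top and hence permutes the co-atoms $c_w$. The computation $\Necessarly c_w=\chi_{W\setminus N^{-1}(w)}$ then immediately yields $|N^{-1}(w)|=1$ and that $w\mapsto g(w)$ is a bijection, with $f=g^{-1}$ the desired successor map. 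What the paper's more pedestrian argument buys is visibility of the weaker hypotheses actually used---only the co-atoms need to be in the range, and only functions differing at a single point need to be separated---which the authors exploit in Proposition~\ref{newlemma} and Corollary~\ref{newthm}. Your co-atom argument uses the isomorphism more globally (you need the inverse to be order-preserving to conclude $\Necessarly c_w$ is still a co-atom), but it is shorter and more conceptual.
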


\begin{proof}
Assume first that the condition on the Kripke frame holds.
Then for every $w\in W$ there is exactly one element $S(w)\in W$ such that $w \mathcal E S(w)$; similarly, there is exactly one element $P(w)\in W$ such that $P(w) \mathcal E w$, and functions $S,P:W\to W$ are bijections such that $P=S^{-1}$.
So, $\forall a\in \set{0,1}^W\ \forall w\in W\ \Necessarly a(w)=aS(w)$.
Consequently, given any $b\in \set{0,1}^W$ one has $\forall w\in W\ b(w)= \Necessarly (bP)(w)$, showing that $b= \Necessarly (bP)$ and that $ \Necessarly $ is surjective.
On the other hand, let $a,a'\in \set{0,1}^W$ be such that $a(w)\neq a'(w)$ for some $w\in W$; then $ \Necessarly a(P(w))=a(w)\neq a'(w)= \Necessarly a'(P(w))$, establishing the injectivity of $ \Necessarly $.

Conversely, assume that $ \Necessarly $ is bijective.

First notice that given any $w\in W$ there must be some $w'\in W$ with $w \mathcal E w'$: otherwise for any $a\in \set{0,1}^W$ one would have $ \Necessarly a(w)=1$, contradicting the surjectivity of $ \Necessarly $.
We claim now that for every $w\in W$ there is $y\in W$ such that $w \mathcal E y$ and for no $z\neq w$ one has $z \mathcal E y$.
Otherwise, if $w$ is such that every time $w \mathcal E y$ there is $z\neq w$ such that $z \mathcal E y$, given $a$ with $ \Necessarly a(w)=0$ there would exist $z\neq w$ such that $ \Necessarly a(z)=0$.
But then the function taking value $0$ in $w$ and $1$ elsewhere would not be in the range of $ \Necessarly $, reaching a contradiction.
So let $S:W\to W$ be a function assigning to each $w$ an element $y$ as above.

Analogously, given any $w\in W$ there exists $w'\in W$ such that $w' \mathcal E w$: otherwise if $a,a'\in \set{0,1}^W$ agree everywhere except on $w$, then $ \Necessarly a= \Necessarly a'$, against the injectivity of $ \Necessarly $.
Moreover, for every $w\in W$ there exists $y\in W$ such that $y \mathcal E w$ and for no $z\neq w$ one has $y \mathcal E z$.
Indeed, if $w$ were such that each time $y \mathcal E w$ there exists $z\neq w$ with $y \mathcal E z$, let $a,a'\in \set{0,1}^W$ be such that:
\begin{itemize}
\item $a(z)=0$ whenever there is $y\in W$ such that $y \mathcal E w,y \mathcal E z$ both hold (in particular, $a(w)=0$);
\item $a'$ agrees with $a$ on $W\setminus \set{w} $, but $a'(w)=1$.
\end{itemize}
Then $ \Necessarly a= \Necessarly a'$, contradicting the fact that $ \Necessarly $ is injective.
This allows to define a function $P:W\to W$ assigning to every $w$ an element $y$ as above.

Notice now that, for all $w\in W$, one has both $PS(w)=w$ and $SP(w)=w$, that is $P=S^{-1}$.
This implies that for every $w\in W$ there is a unique $y\in W$ such that $w \mathcal E y$, namely $y=S(w)$; similarly, there is a unique $z\in W$ such that $z \mathcal E w$, namely $z=P(w)$.
So the desired decomposition of $ \Kripke $ into cycles and lines follows.
\end{proof}

As a consequence, on a finite frame, operator $ \Necessarly $ is an isomorphism if and only if the frame is the disjoint union of its cycles.

\begin{corollary} \label{injsurj}
Let $ \Kripke =(W, \mathcal E )$ be a finite Kripke frame, and let $\{ (W_i, \mathcal E_i)\}_{i\in I}$ be the collection of all cycles of $ \Kripke $.
Then the modal operator $\Necessarly : \set{0,1}^W\to \set{0,1}^W$ is injective if and only if it is surjective, if and only if $\{ W_i\}_{i\in I}$ is a partition of $W$ and $\{ \mathcal E_i\}_{i\in I}$ is a partition of $ \mathcal E $.
\end{corollary}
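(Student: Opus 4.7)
The plan is to deduce this corollary directly from Theorem \ref{mainthm} by exploiting the finiteness of $W$.

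First, I would observe that since $W$ is finite, so is $\set{0,1}^W$, and any self-map of a finite set is injective if and only if it is surjective. Hence injectivity, surjectivity, and bijectivity of $\Necessarly$ coincide in this setting. Second, by Proposition \ref{boxmorphism} the map $\Necessarly$ is a monoid homomorphism, so bijectivity is equivalent to being a monoid isomorphism. Consequently, Theorem \ref{mainthm} applies and yields: $\Necessarly$ is injective (equivalently, surjective) if and only if $W$ is the disjoint union $\bigcup_{i} W_i$ and $\mathcal{E}$ is the disjoint union $\bigcup_i \mathcal{E}_i$, where the $(W_i,\mathcal{E}_i)$ range over all cycles and lines of $\Kripke$.

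Finally, I would invoke the finiteness of $W$ once more to eliminate lines from this decomposition: by definition, a line is indexed by $\integers$ and therefore has infinite universe, so no subframe of a finite Kripke frame can be a line. Thus, within $\Kripke$, the collection of cycles and lines coincides with the collection of cycles $\set{(W_i,\mathcal{E}_i)}_{i\in I}$ appearing in the statement, and the condition of Theorem \ref{mainthm} becomes exactly the partition condition claimed.

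Since each step is essentially a specialization of material already established, I do not expect a genuine obstacle; the only point that warrants a brief word is that lines are ruled out on cardinality grounds in the finite setting, which is what makes the statement cleaner than the one in Theorem \ref{mainthm}.
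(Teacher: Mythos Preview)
Your proposal is correct and follows essentially the same approach as the paper: use finiteness of $\set{0,1}^W$ to collapse injectivity, surjectivity, and bijectivity, then invoke Theorem~\ref{mainthm} and rule out lines because they are infinite. The only addition you make is explicitly citing Proposition~\ref{boxmorphism} to pass from bijection to isomorphism, which is a harmless (and arguably helpful) clarification.
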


\begin{proof}
The first equivalence holds as $ \set{0,1}^W$ is finite.
As for the second one, use Theorem \ref{mainthm} and the observation that every line is infinite.
\end{proof}

Notice that in the proof of the forward implication in Theorem \ref{mainthm}, to unveil the structure of the Kripke frame we did not use the full hypothesis of bijectivity of $ \Necessarly $, but an apparently weaker condition.
The reason is contained in the following fact.

\begin{proposition} \label{newlemma}
\begin{enumerate}
\item If every element of $ \set{0,1}^W$ assuming exactly once value $0$ is in the range of $ \Necessarly $, then $ \Necessarly : \set{0,1}^W\to \set{0,1}^W$ is surjective.
\item If $ \Necessarly a\ne \Necessarly a'$ for every $a,a'\in \set{0,1}^W$ differing on exactly one value, then $ \Necessarly $ is injective.
\end{enumerate}
\end{proposition}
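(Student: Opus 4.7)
The plan is to handle the two parts by different routes: part (1) is a short application of the meet-preservation property from Remark \ref{movedrmk}, while part (2) hinges on a one-step reduction via the morphism property of $\Necessarly$, followed by a ``one-bit flip'' argument.

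For part (1), given $b\in\set{0,1}^W$, let $Z=\setof{w}{b(w)=0}$ and, for each $w\in Z$, let $e_w\in\set{0,1}^W$ be the function taking value $0$ exactly at $w$. Then $b=\bigwedge_{w\in Z}e_w$, interpreting the empty meet as the constant function $1$. The hypothesis supplies $a_w$ with $\Necessarly a_w=e_w$ for each $w\in Z$, and $\Necessarly 1=1$ handles the case $Z=\emptyset$. Applying equation \eqref{neweqn} yields $\Necessarly\bigl(\bigwedge_{w\in Z}a_w\bigr)=\bigwedge_{w\in Z}e_w=b$, so $b$ lies in the range of $\Necessarly$.

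For part (2), I assume $\Necessarly a=\Necessarly a'$ with $a\neq a'$, writing $Z(x)=\setof{w}{x(w)=0}$ for brevity; the aim is to produce $b,b'$ differing at a single world with $\Necessarly b=\Necessarly b'$. The key reduction is to pass to a comparable pair: setting $c=a\cdot a'$, Proposition \ref{boxmorphism} together with the idempotency of $\set{0,1}$-valued multiplication gives $\Necessarly c=\Necessarly a\cdot\Necessarly a'=(\Necessarly a)^2=\Necessarly a$. Since $a\neq a'$, $c$ cannot equal both $a$ and $a'$, so after relabelling I may assume $a'<a$ componentwise with $\Necessarly a=\Necessarly a'$, and in particular $Z(a)\subsetneq Z(a')$. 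I then pick any $w_0\in Z(a')\setminus Z(a)$ and let $b$ agree with $a$ off $w_0$ with $b(w_0)=0$, so that $Z(b)=Z(a)\cup\set{w_0}$ and $b$ differs from $a$ only at $w_0$. A discrepancy $\Necessarly b(v)\ne\Necessarly a(v)$ could arise only when $w_0\in N(v)$ and $N(v)\cap Z(a)=\emptyset$; but $w_0\in N(v)\cap Z(a')$ gives $\Necessarly a'(v)=0=\Necessarly a(v)$, forcing $N(v)\cap Z(a)\neq\emptyset$, a contradiction. So $(b,a)$ violates the injectivity hypothesis. The only non-routine ingredient is the product-trick reduction to a comparable pair; the rest is bookkeeping with zero sets.
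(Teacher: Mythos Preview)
Your proof is correct and follows essentially the same route as the paper. Part~(1) is identical, and in part~(2) the paper likewise passes to the meet $a_1\wedge a_2$ and flips a single bit; the only cosmetic difference is that the paper concludes $\Necessarly b=\Necessarly a$ via the sandwich $a'\le b\le a$ and monotonicity from Remark~\ref{movedrmk}, rather than your direct zero-set computation.
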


\begin{proof}
(1) Let $b\in \set{0,1}^W$.
Then $b=\bigwedge_{b(w)=0}b_w$, where
\[ \left \{ \begin{array}{l}
b_w(w)=0 \\
b_w(w')=1 \text{ for } w'\in W\setminus\{ w\}
\end{array} \right . .
\]
By the hypothesis, for each $w\in W$ let $a_w\in \set{0,1}^W$ such that $ \Necessarly a_w=b_w$.
Then, by Eq.~\eqref{neweqn},
\[
b=\bigwedge_{b(w)=0} \Necessarly a_w= \Necessarly (\bigwedge_{b(w)=0}a_w)\in \mathrm{range} ( \Necessarly ) \, .
\]

(2) Suppose towards a contradiction that $a_1,a_2$ are distinct and $ \Necessarly a_1= \Necessarly a_2$.
Let $a=a_1\wedge a_2$, so that by Proposition \ref{boxmorphism} also $ \Necessarly a= \Necessarly a_1$.
Since $a_1\ne a_2$, there exist $i\in \set{1,2} ,w\in W$ such that $a(w)=0,a_i(w)=1$.
Define $a'\in \set{0,1}^W$ by letting
\[ \left \{ \begin{array}{l}
a'(w)=1 \\
a'(w')=a(w') \text{ for } w'\in W\setminus\{ w\}
\end{array} \right . .
\]
Since $a\le a'\le a_i$, then $ \Necessarly a= \Necessarly a'$ by Remark \ref{movedrmk}, but this is a contradiction as $a,a'$ differ on exactly one argument.
\end{proof}

\begin{corollary} \label{newthm}
Let $ \Kripke $ be a finite Kripke frame.
Then the following are equivalent:
\begin{enumerate}
\item Every $b\in \set{0,1}^W$ assuming exactly once value $0$ is in the range of $ \Necessarly $.
\item If $a,a'\in \set{0,1} $ differ for exactly one value, then $ \Necessarly a\ne \Necessarly a'$.
\item $ \Necessarly $ is an isomorphism.
\end{enumerate}
\end{corollary}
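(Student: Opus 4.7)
The plan is to observe that this corollary is essentially a repackaging of Proposition \ref{newlemma} together with Corollary \ref{injsurj}, so my strategy is to derive (1)$\Rightarrow$(3) and (2)$\Rightarrow$(3) by the chain ``weak condition $\Rightarrow$ surjectivity / injectivity $\Rightarrow$ both $\Rightarrow$ isomorphism'', and to handle the reverse implications trivially from the definition of isomorphism.

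First I would dispatch the easy direction (3)$\Rightarrow$(1) and (3)$\Rightarrow$(2): if $\Necessarly$ is an isomorphism of $\set{0,1}^W$, then in particular it is a bijection, so every element of $\set{0,1}^W$ (in particular those assuming $0$ exactly once) lies in its range, giving (1); and distinct inputs (in particular those differing on exactly one value) yield distinct outputs, giving (2).

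For (1)$\Rightarrow$(3), I would apply Proposition \ref{newlemma}(1) to conclude directly that $\Necessarly$ is surjective on $\set{0,1}^W$. Since $W$ is finite, $\set{0,1}^W$ is a finite set, so surjectivity of an endofunction forces bijectivity. Then Corollary \ref{injsurj} (combined with Proposition \ref{boxmorphism}, which already ensures $\Necessarly$ is a monoid homomorphism) upgrades this bijection to a monoid isomorphism, yielding (3). The argument for (2)$\Rightarrow$(3) is the symmetric one: by Proposition \ref{newlemma}(2) the hypothesis forces injectivity of $\Necessarly$, finiteness of $\set{0,1}^W$ then gives bijectivity, and Corollary \ref{injsurj} again promotes this to an isomorphism.

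Since all the nontrivial content has already been established, I do not expect any real obstacle here; the only thing to be careful about is invoking finiteness of $W$ (hence of $\set{0,1}^W$) at the step where injectivity or surjectivity alone is promoted to bijectivity, which is precisely where Corollary \ref{injsurj} applies and where the statement of the corollary itself requires the frame to be finite. The resulting proof is therefore just a short assembly of the previously proven ingredients.
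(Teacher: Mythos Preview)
Your proposal is correct and matches the paper's intent: the corollary is stated without proof precisely because it is the immediate combination of Proposition~\ref{newlemma} (conditions (1) and (2) give surjectivity and injectivity, respectively) with Corollary~\ref{injsurj} (in the finite case injectivity and surjectivity coincide), together with the trivial reverse implications from bijectivity. There is nothing to add.
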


Notice that for infinite Kripke frames the condition of surjectivity (or injectivity) of $ \Necessarly $ alone does not imply that $ \Necessarly $ is an isomorphism.
Let indeed $ \Kripke $ be the set $ \N $ of natural numbers endowed with the relation $ \mathcal E $ defined by
\[
n \mathcal E m\Leftrightarrow m=n+1.
\]
Then $ \Necessarly $ is surjective, since for any $b\in \set{0,1}^W$ we have $b= \Necessarly a$ where $a$ is defined by letting $a(0)$ be arbitrary and $a(h)=b(h-1)$ for $h>0$, but $ \Necessarly $ is not injective since every element of $ \set{0,1}^W$ has two preimages.

Choosing instead $ \Kripke'=( \N , \mathcal E')$ where
\[
n \mathcal E'm\Leftrightarrow n=m+1
\]
one obtains that $ \Necessarly $ is injective, since if $a(n)\ne a'(n)$ then $ \Necessarly a(n+1)\ne \Necessarly a'(n+1)$, but not surjective, as $ \Necessarly a(0)=1$ for every $a$.

In the next section we present an algorithmic way to describe the range of $ \Necessarly $ through systems of binomial equations, assuming that the Kripke frame is finite.

\section{An application of toric ideals}
\label{sec:commutative-algebra}
Let a finite Kripke frame $\Kripke = (W,\mathcal E)$ be given, where we can assume that $W= \set{1,\ldots ,K} $. The adjacency matrix of $\mathcal K $ is denoted by $E$ and $e_w$ is the $w$-th row of $E$.
Since we deal with functions $a:W\to \set{0,1}$, so with elements of $ \set{0,1}^K$, the range of $ \Necessarly $ is a subset of $ \set{0,1}^K$. We want to obtain equations for $ \mathrm{range} ( \Necessarly )$ as a subvariety of $ \C^K$ --- the use of the field $ \C $ allowing us to apply well established results in Commutative Algebra.

Recall that, given an ideal $I$ in the polynomial ring $ \C [x_1,\ldots ,x_n]$, the variety of $I$ is the set
\[
V(I)= \setof{a\in \C^n}{\forall f\in I\ f(a)=0} \ .
\]
Conversely, for $A\subseteq \C^n$, the ideal of $A$ is
\[
\Ideal (A)= \setof{f\in \C [x_1,\ldots ,x_n]}{\forall a\in A\ f(a)=0} \ .
\]
Since every ideal is finitely generated we write $I=\langle f_1,\ldots ,f_r\rangle $ for the ideal generated by the polynomials $f_1,\ldots ,f_r$.

From the definition of the modal operator in Eq.~\eqref{eq:square} we see that each value $\Necessarly a(w)$ has the algebraic form of a square-free monomial in the indeterminates $a(w')$, $w' \in W$. We are in the special case where the value of each indeterminate is either 0 or 1.
We thus consider two sets of indeterminates:
\begin{itemize}
\item $t_w = a(w)$, $w \in W$;

\item $z_w = \Necessarly a(w)$, $w \in W$,
\end{itemize}
and work in the polynomial ring $ \C [t_w,z_w:w \in W]$.

Since $a(w) \in \{0,1\}$ for all $w$, we define a first set of equations and the corresponding ideal

\begin{equation*} 
t_w^2-t_w=0, \quad \text{for } w \in W, \qquad \qquad {\mathcal I}_L= \langle t_w^2-t_w  :  w \in W\rangle .
\end{equation*}

From Eq.~\eqref{eq:square}, we define a second set of equations involving the $z$'s and the corresponding ideal

\begin{equation} \label{toric:id}
z_w-\prod_{w' \in N(w)} t_{w'}=0, \quad \text{for } w \in W, \qquad \qquad {\mathcal I}_T= \left \langle z_w-\prod_{w' \in N(w)} t_{w'}  :  w \in W \right \rangle .
\end{equation}
The ideal ${\mathcal I}_T$ in Eq.~\eqref{toric:id} is a toric ideal in the indeterminates $z_w$, $w \in W$. Toric ideals are special binomial ideals, see e.g. \cite[Ch. 4]{sturmfels:1996} for a general reference on toric ideals. They are applied in many contexts, and especially in Algebraic Statistics for contingency tables, to describe varieties (i.e., statistical models) for finite sample spaces, see e.g. \cite{rapallo:2007}.

Now, define the ideal
\begin{equation*} 
{\mathcal I} = {\mathcal I}_L + {\mathcal I}_T
\end{equation*}
and consider the affine space $ \C^{2K} = \C^{K}_{(t)} \times \C^{K}_{(z)}$.

So, in the space $ \C^{2K}$ we can define the varieties $V({\mathcal I}_L)$, $ V({\mathcal I}_T)$, and $V({\mathcal I })$.
While the variety $V({\mathcal I}_L)$ is clearly the set of all points whose $t$-coordinates are $0$ or $1$, the other two varieties are more interesting. In particular, note that the variety $V({\mathcal I}_T)$ is the toric variety of the adjacency matrix $E$ of the Kripke frame.

The projections of such varieties onto the affine space $\C^{K}_{(z)}$ are denoted with
\[
\widetilde V({\mathcal I}_L) = \pr_{ \C^{K}_{(z)}} V({\mathcal I}_L) \qquad \widetilde V({\mathcal I}_T) = \pr_{ \C^{K}_{(z)}} V({\mathcal I}_T) \qquad \widetilde V({\mathcal I}) = \pr_{ \C^{K}_{(z)}} V({\mathcal I}) \, .
\]
Notice that $ \widetilde{V} ( \mathcal I )= \mathrm{range} ( \Necessarly )$.  

On the other hand, let the elimination ideals of the $t$'s indeterminates be:
\begin{equation*}
\widetilde{\mathcal I_L} = \elimof{(t_w)_{w \in W}}{\mathcal I_L} \qquad \widetilde{\mathcal I_T} = \elimof{(t_w)_{w \in W}}{\mathcal I_T} \qquad \widetilde{\mathcal I} = \elimof{(t_w)_{w \in W}}{\mathcal I} \, .
\end{equation*}
It is known, see e.g. \cite[Theorem 3, page 131]{cox|little|oshea:1997}, that the varieties of such elimination ideals are the Zariski closure of the above projections.
Since every finite set is Zariski closed, we can conclude that
\[
\mathrm{range} ( \Necessarly ) = \widetilde{V} ( \mathcal I )=V( \widetilde { \mathcal I } )\ .
\]
Consequently, any set of generators of $ \widetilde{ \mathcal I }$ provides a system of equations for $ \mathrm{range} ( \Necessarly )$.
Moreover, notice that the ideal $\widetilde{\mathcal I}$ is both an elimination ideal and a binomial ideal (see e.g. \cite{eisenbud1996}). Thus, a set of generators of such an ideal can be computed through Gr\"obner bases with symbolic software (in the examples of next section we have used CoCoA, see \cite{cocoa}).

For any $\alpha \in \Z^K$, let $\alpha_+,\alpha_-\in \N^K$ have disjoint support and be such that $\alpha =\alpha_+-\alpha_-$.
The following theorem uses the theory of toric ideals of Sturmfels, see \cite[Ch. 4]{sturmfels:1996}, and it describes the generators of the ideals $\widetilde{\mathcal I_T}$ and $\widetilde{\mathcal I}$, thus providing the announced equations for $ \mathrm{range} ( \Necessarly )$ depending on the incidence matrix $E$.
Recall that, given $\beta\in \N^K$, a compact expression like $z^{\beta }$ denotes the product $\prod_{i=1}^Kz_i^{\beta (i)}$.

\begin{theorem} \label{mainproposition}
  \begin{enumerate}
\item  The ideal $ \widetilde{ \mathcal I_T} $ is generated by the binomials
  \begin{equation*}
    z^{\alpha_+} - z^{\alpha_-}, \quad \alpha\in \Z^K\cap \kerof{E^t} \, ;
  \end{equation*}
\item The ideal $\widetilde{\mathcal I}$ is generated by the binomials
\[
z_w^2 -z_w \ ,\ w \in W
\]
and by the square-free binomials of the form $z^u - z^v$ with $u,v \in \{0,1\}^K$ such that $\mathrm{supp}(E^tu)=\mathrm{supp}(E^tv)$.
\end{enumerate}
\end{theorem}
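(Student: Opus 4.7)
My plan is to analyze both elimination ideals as kernels of explicit ring homomorphisms. Let $\psi\colon\C[z]\to\C[t]$ be the monomial map $z_w\mapsto t^{e_w}=\prod_{w'}t_{w'}^{E(w,w')}$, and let $\phi\colon\C[z]\to\C[t]/\mathcal I_L$ be its composition with the quotient by $\mathcal I_L$. A routine substitution argument using $\mathcal I_T=\langle z_w-t^{e_w}:w\in W\rangle$ together with $\mathcal I=\mathcal I_L+\mathcal I_T$ gives $\widetilde{\mathcal I_T}=\Ker\psi$ and $\widetilde{\mathcal I}=\Ker\phi$.

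For part~(1), $\Ker\psi$ is by construction the toric ideal associated to the matrix whose $w$-th column is $e_w$, namely $E^t$. Sturmfels' characterization of toric ideals~\cite[Lemma~4.1]{sturmfels:1996} applied to this matrix yields at once that $\widetilde{\mathcal I_T}$ is generated by the binomials $z^{\alpha_+}-z^{\alpha_-}$ for $\alpha\in\Z^K\cap\Ker(E^t)$.

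For part~(2) I would first verify that the proposed generators lie in $\Ker\phi$. Since each $e_w\in\{0,1\}^W$, $t^{2e_w}\equiv t^{e_w}\pmod{\mathcal I_L}$, so $\phi(z_w^2-z_w)=0$. Likewise, for $u,v\in\{0,1\}^K$ with $\mathrm{supp}(E^t u)=\mathrm{supp}(E^t v)$, both $\phi(z^u)$ and $\phi(z^v)$ reduce to the same square-free monomial $\prod_{w'\in\mathrm{supp}(E^t u)}t_{w'}$ in $\C[t]/\mathcal I_L$, so $z^u-z^v\in\Ker\phi$.

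The main step is the reverse inclusion. The key fact is that the square-free monomials $\{\prod_{w'\in S}t_{w'}:S\subseteq W\}$ form a $\C$-basis of $\C[t]/\mathcal I_L$. Given $f\in\Ker\phi$, I would reduce $f$ modulo the generators $z_w^2-z_w$ to obtain a square-free polynomial $f'=\sum_{u\in\{0,1\}^K}c_u z^u$ still in $\Ker\phi$. Grouping the terms of $\phi(f')$ by the class $S=\mathrm{supp}(E^t u)$ and invoking the basis property forces $\sum_{u:\,\mathrm{supp}(E^t u)=S}c_u=0$ for every such $S$. Choosing a representative $u_S$ in each class, the partial sum over the class can then be rewritten as $\sum_{u:\,\mathrm{supp}(E^t u)=S}c_u(z^u-z^{u_S})$, a sum of claimed generators, showing that $f$ lies in the ideal they generate. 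I anticipate the main obstacle to be the clean bookkeeping of this final reduction step; once the square-free basis of $\C[t]/\mathcal I_L$ is in play, the rest is linear algebra.
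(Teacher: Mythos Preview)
Your argument is correct. Part~(1) matches the paper's proof exactly: both identify $\widetilde{\mathcal I_T}$ as the toric ideal of $E^t$ and cite Sturmfels' lemma. For part~(2) the paper takes a slightly different route: it invokes the Eisenbud--Sturmfels theory of binomial ideals to assert that $\widetilde{\mathcal I}$, being an elimination ideal of the binomial ideal $\mathcal I$, is itself binomial, and then only checks which binomials lie in it (leaving implicit the reduction of an arbitrary binomial to a square-free one modulo the $z_w^2-z_w$). Your approach bypasses this external input entirely: by working with the explicit square-free $\C$-basis of $\C[t]/\mathcal I_L$ you handle an arbitrary $f\in\Ker\phi$ directly, so you never need to know in advance that $\widetilde{\mathcal I}$ is generated by binomials. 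This makes your proof more self-contained and a bit more transparent; the paper's version is shorter but leans on a nontrivial structural result.
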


\begin{proof}
(1) This is, for instance, \cite[Lemma 1.1(a)]{sturmfarticle}.

(2) Since $\widetilde{\mathcal I}$ is a binomial ideal, we only need to find the generators of $\widetilde{\mathcal I}$ by looking at the binomials of $ \C [z]$ belonging to $\widetilde{\mathcal I}$.

Ideal $\widetilde{\mathcal I}$ contains the binomials
\[
z_w^2 -z_w \ ,\ w \in W\ .
\]
Moreover, $\widetilde{\mathcal I}$ contains a square-free binomial of the form $z^u - z^v$ with $u,v \in \{0,1\}^K$ if and only if $\mathrm{supp}(E^tu)=\mathrm{supp}(E^tv)$. In fact, in $ \C [z,t]$ modulo $ \widetilde{ \mathcal I }$ we have
\[
z^u-z^v = \prod_{w \ | \ u(w)=1} \prod_{w' \in N(w)} t_{w'} - \prod_{w \ | \ v(w)=1} \prod_{w' \in N(w)} t_{w'}
\]
\[
= \prod_{w'} t_{w'}^{\sum E(u=1,w')}- \prod_{w'} t_{w'}^{\sum E(v=1,w')}
\]
and this binomial belongs to ${\mathcal I}$ if and only if $\mathrm{supp}(E^tu)=\mathrm{supp}(E^tv)$.
\end{proof}

\begin{remark} \label{rmkenumerate}
\begin{enumerate}
\item If $\alpha\in \Z^K\cap \kerof{E^t} $, letting $u,v\in \set{0,1}^K$ be defined by
\[
u(w)=\min (1,\alpha_+(w)), \quad v(w)=\min (1,\alpha_-(w))
\]
then $ \mathrm{supp} (E^tu)= \mathrm{supp} (E^tv)$; in other words, each of the binomials generating $ \widetilde{ \mathcal I_T}$ as for Theorem \ref{mainproposition}(1) gives rise to a binomial in the set of generators for $ \widetilde{ \mathcal I }$ described in Theorem \ref{mainproposition}(2).
In fact, $ \widetilde{ \mathcal I_T} \subseteq \widetilde{ \mathcal I } $.
However the binomials obtained in this way, together with the binomials $z_w^2-z_w$, are in general not enough to generate $ \widetilde{ \mathcal I }$: see, for instance, Examples \ref{exampleten} and \ref{exampletwelve} below.
We give in Proposition \ref{geomtame} a condition under which they suffice.
\item Theorem \ref{mainproposition}(1) says that all $\Necessarly a$, in addition to assuming values in $ \set{0,1} $, are subject to the following constraints:
\begin{equation} \label{generatingeqs}
\prod_{w\in W} (\Necessarly a(w))^{\alpha_+(w)} = \prod_{w\in W} (\Necessarly a(w))^{\alpha_-(w)}, \quad \alpha\in \Z^K\cap \kerof{E^t} \ .
\end{equation}
Similarly as what remarked above, the equations \eqref{generatingeqs}, together with the requirement for $ \Necessarly a$ of taking values in $ \set{0,1} $, are in general not enough to define the range of $ \Necessarly $: see Examples \ref{exampleten} and \ref{exampletwelve}.
\end{enumerate}
\end{remark}

The equations for $ \mathrm{range} ( \Necessarly )$ provided by Theorem \ref{mainproposition} are usually quite complicated.
We now look for some conditions on the Kripke frame under which one can describe $ \mathrm{range} ( \Necessarly )$ with a simplified set of equations.
A reasonable simplification (see Remark \ref{rmkenumerate}(2)) would be to describe $ \mathrm{range} ( \Necessarly )$ just using the equations stating that $ \Necessarly$ takes values in $ \set{0,1}^W$ and \eqref{generatingeqs}; in other words, using (the generators of) the ideal $\langle z_w^2-z_w:w\in W\rangle + \widetilde{ \mathcal I_T} $.
This is captured by the following definition.

\begin{definition} \label{deftame}
Let $ \Kripke =(W, \mathcal E )$ be a finite Kripke frame.
We say that $ \Kripke $ is \emph{tame} if
\[
\mathrm{range} ( \Necessarly )=V(\langle z_w^2-z_w:w\in W\rangle + \widetilde{ \mathcal I_T} ).
\]
\end{definition}

To shorten notations, we set
\[
J=\langle z_w^2-z_w:w\in W\rangle + \widetilde{ \mathcal I_T} ;
\]
in particular, $J\subseteq \widetilde{ \mathcal I }$ and thus $ \mathrm{range} ( \Necessarly )=V( \widetilde{ \mathcal I })\subseteq V(J)$.

Notice that if $ \Kripke $ is such that $J= \widetilde { \mathcal I } $, then it is tame.
The converse holds as well.
To see this we start with the following general lemma.
It could be well-known, but we could not find it in the literature.

\begin{lemma} \label{lemradnew}
Let $I$ be an ideal in $ \C [z_1,\ldots ,z_K]$ be such that $z_i^2-z_i\in I$ for every $i\in\{ 1,\ldots ,K\} $.
Then $I$ is a radical ideal.
\end{lemma}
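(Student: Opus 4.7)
The plan is to prove that $R := \mathbb{C}[z_1,\ldots,z_K]/I$ is a reduced ring, which is equivalent to $I$ being radical. Since $\langle z_1^2-z_1,\ldots,z_K^2-z_K\rangle\subseteq I$, the ring $R$ is a quotient of $S:=\mathbb{C}[z_1,\ldots,z_K]/\langle z_i^2-z_i:i=1,\ldots,K\rangle$, so it suffices to show that every quotient of $S$ is reduced.

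The first step is to identify $S$ with the direct product $\mathbb{C}^{\{0,1\}^K}$. The evaluation map $\mathbb{C}[z_1,\ldots,z_K]\to\mathbb{C}^{\{0,1\}^K}$ sending $f$ to the tuple $(f(P))_{P\in\{0,1\}^K}$ annihilates each $z_i^2-z_i$, so it descends to a ring homomorphism $\varphi\colon S\to\mathbb{C}^{\{0,1\}^K}$. The multilinear monomials $\prod_{i\in T}z_i$ (for $T\subseteq\{1,\ldots,K\}$) span $S$ as a $\mathbb{C}$-vector space, since $z_i^2$ reduces to $z_i$; hence $\dim_{\mathbb{C}}S\leq 2^K=\dim_{\mathbb{C}}\mathbb{C}^{\{0,1\}^K}$. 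On the other hand, the $2^K$ point evaluations are linearly independent, so $\varphi$ is surjective; a dimension count then forces $\varphi$ to be an isomorphism. (Equivalently one may exhibit the orthogonal idempotent basis $e_P=\prod_{i:p_i=1}z_i\prod_{i:p_i=0}(1-z_i)$.)

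The second step invokes the description of ideals in a finite product of fields: every ideal $J\subseteq\mathbb{C}^{\{0,1\}^K}$ has the form $J=\{f:f(P)=0\text{ for all }P\in T\}$ for some $T\subseteq\{0,1\}^K$, and the quotient $\mathbb{C}^{\{0,1\}^K}/J$ is canonically isomorphic to $\mathbb{C}^{T}$. In particular, every quotient of $\mathbb{C}^{\{0,1\}^K}$ is again a finite product of copies of $\mathbb{C}$, hence reduced. Applying this to the image of $I$ in $S$ under $\varphi$ shows that $R=S/\overline{I}$ is reduced, so $I=\sqrt{I}$.

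The only delicate step is the isomorphism $S\cong\mathbb{C}^{\{0,1\}^K}$ in the first part; once this is in place, the conclusion reduces to the elementary fact that quotients of products of fields are products of fields. No Nullstellensatz or primary decomposition machinery is needed, which makes this argument essentially self-contained at the level of commutative-algebra basics.
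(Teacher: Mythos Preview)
Your proof is correct, and it takes a genuinely different route from the paper's own argument. The paper proceeds via Seidenberg's algorithm for computing the radical of a zero-dimensional ideal: one forms the elimination ideals $I_i=I\cap\mathbb{C}[z_i]$, each principal with generator $f_i$, and uses that $\sqrt{I}=I+\langle g_1,\ldots,g_K\rangle$ where $g_i$ is the squarefree part of $f_i$; since $z_i^2-z_i\in I_i$, the polynomial $f_i$ must divide $z_i^2-z_i$ and is therefore already squarefree, whence $\sqrt{I}=I$.

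Your approach is more structural: you identify $S=\mathbb{C}[z_1,\ldots,z_K]/\langle z_i^2-z_i\rangle$ with the product ring $\mathbb{C}^{\{0,1\}^K}$ and then observe that every quotient of a finite product of fields is again such a product, hence reduced. This avoids the appeal to Seidenberg's theorem and is arguably more self-contained; it also makes transparent \emph{why} the result holds (the coordinate ring of a finite reduced scheme is a product of fields). The paper's argument, on the other hand, is closer to the computational spirit of the surrounding discussion and points to an explicit algorithm. One small stylistic remark: the sentence ``the $2^K$ point evaluations are linearly independent, so $\varphi$ is surjective'' is correct but compressed---linear independence of the evaluation functionals is equivalent to surjectivity of $\varphi$, yet a reader might appreciate either that equivalence spelled out or simply the direct verification via the idempotents $e_P$ that you already mention parenthetically.
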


\begin{proof}
First, $I$ is a $0$-dimensional ideal, as $V(I)\subseteq \set{0,1}^K$, so one can apply Seidenberg's algorithm (see \cite{seiden1974}) to compute $ \sqrt{I} $.
Namely, let $I_i=I\cap \C [z_i]$; then, as an ideal in $ \C [z_i]$, it turns out that $I_i$ is generated by a single monic polynomial, say $f_i$.
Notice that all $f_i$ can be assumed to be non-constant: otherwise, as $f_i\in I$, it follows that $I= \C [z_1,\ldots ,z_K]$, which is radical.

If $g_i$ denotes the squarefree part of $f_i$, then
\[
\sqrt{I} =I+\langle g_1,\ldots ,g_n\rangle .
\]
Consequently, it is enough to show that $g_i\in I$ for every $i\in \set{1,\ldots ,K} $.
Moreover, $g_i= \frac{f_i}{h_i} $, where $h_i$ is the monic polynomial that is the greatest common divisor of $f_i,f_i'$.

Since $z_i^2-z_i\in I_i$, polynomial $f_i$ must divide $z_i^2-z_i$, so there are various possibilities.
\begin{itemize}
\item $f_i=z_i^2-z_i$.
Then $f_i'=2z_i-1$, so $h_i=1$ and $g_i=f_i\in I$.
\item $f_i=z_i-1$.
Then $f_i'=1$, so $h_i=1$ and $g_i=f_i\in I$.
\item $f_i=z_i$.
Then $f_i'=1$, so $h_i=1$ and again $g_i=f_i\in I$.
\end{itemize}
\end{proof}

Lemma \ref{lemradnew} applies in particular to ideal $J$.

\begin{proposition} \label{propatsat}
Suppose that $ \Kripke $ is tame.
Then $J= \widetilde{ \mathcal I } $.
\end{proposition}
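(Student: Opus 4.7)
The plan is to invoke the Nullstellensatz and Lemma~\ref{lemradnew} to promote the varietal equality $V(J)=V(\widetilde{\mathcal I})$ (which follows immediately from the definition of tameness) to an equality of ideals, by checking both ideals are radical.

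First I would unwind the hypothesis: tameness says $\mathrm{range}(\Necessarly)=V(J)$, while the setup of Section \ref{sec:commutative-algebra} already gives $\mathrm{range}(\Necessarly)=V(\widetilde{\mathcal I})$. Combining these yields $V(J)=V(\widetilde{\mathcal I})$. Since we are working in $\C[z_1,\dots,z_K]$, Hilbert's Nullstellensatz then provides $\sqrt{J}=\idealof{V(J)}=\idealof{V(\widetilde{\mathcal I})}=\sqrt{\widetilde{\mathcal I}}$, reducing the proposition to the assertion that both ideals are radical.

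Next I would verify the radicality of each. For $J$ this is a direct application of Lemma~\ref{lemradnew}, as $z_w^2-z_w\in J$ by construction. For $\widetilde{\mathcal I}$ one needs $z_w^2-z_w\in\widetilde{\mathcal I}$; this is already recorded in Theorem~\ref{mainproposition}(2), where these binomials appear explicitly among the generators of $\widetilde{\mathcal I}$ (alternatively, $J\subseteq\widetilde{\mathcal I}$ forces it). Lemma~\ref{lemradnew} then applies again, giving $\widetilde{\mathcal I}=\sqrt{\widetilde{\mathcal I}}$ and $J=\sqrt J$. Chaining the equalities,
\[
J=\sqrt{J}=\sqrt{\widetilde{\mathcal I}}=\widetilde{\mathcal I}.
\]

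There is essentially no obstacle here beyond assembling the ingredients; the real content has already been placed in Lemma~\ref{lemradnew} and in the description of $\widetilde{\mathcal I}$ in Theorem~\ref{mainproposition}(2). The only minor point to be careful about is that the Nullstellensatz is being applied in $\C[z_1,\dots,z_K]$ (the coordinate ring after elimination), which is indeed where both $J$ and $\widetilde{\mathcal I}$ live, so the identification $\idealof{V(I)}=\sqrt I$ is legitimate.
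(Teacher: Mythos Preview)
Your proposal is correct and follows essentially the same route as the paper: use tameness to get $V(J)=V(\widetilde{\mathcal I})$, invoke the Nullstellensatz, and apply Lemma~\ref{lemradnew} to pass from radicals to the ideals themselves. The only cosmetic difference is that the paper uses the already-recorded inclusion $J\subseteq\widetilde{\mathcal I}$ for one direction and thus only needs Lemma~\ref{lemradnew} for $J$, writing the chain $\widetilde{\mathcal I}\subseteq\sqrt{\widetilde{\mathcal I}}=\Ideal(V(\widetilde{\mathcal I}))=\Ideal(V(J))=\sqrt{J}=J$; your more symmetric version, applying the lemma to both ideals, is equally valid.
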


\begin{proof}
Since $J\subseteq \widetilde{ \mathcal I }$, it is enough to observe that
\[
\widetilde{ \mathcal I } \subseteq \sqrt{ \widetilde{ \mathcal I }}=\Ideal (V( \widetilde{ \mathcal I } ))= \Ideal(V (J))= \sqrt{J} =J\ .
\]
where the last equality holds by Lemma \ref{lemradnew}.
\end{proof}

We now introduce a class of Kripke frames that turn out to be tame: these are the Kripke frames with the property that any two neighborhoods (see section \ref{boxmorph}) are either disjoint or they coincide, that is such that the neighborhoods $N(w)$ cut $\bigcup_{w\in W}N(w)$ into a partition.
They are described by the following definition.

\begin{definition} \label{defcut}
We say that a Kripke frame $ \Kripke $ is a \emph{partitioning frame} if
\begin{equation} \label{eqpartition}
\forall w,w'\in W\ (N(w)\cap N(w')\neq\emptyset\Rightarrow N(w)=N(w'))\ .
\end{equation}
\end{definition}

  \begin{example}[Partitioning frames]\
The following are examples of partitioning frames.
     \begin{description}
\item[Complete bipartite graphs] Complete bipartite graphs are partitioning frames.
All nodes in the graph are assigned either color $0$ or $1$, and any node colored $0$ is a neighbor of every node of color $1$ and viceversa.
      \item[Isomorphic $\Necessarly$] Examples of partitioning frames are those for which $ \Necessarly $ is an isomorphism, see Theorem~\ref{mainthm}. Notice that for a finite such frame, $ \widetilde{ \mathcal I_T}$ is the null ideal by Theorem \ref{mainproposition}, as $E$ is non-singular.
Moreover, $ \widetilde{ \mathcal I }=\langle z_w^2-z_w:w\in W\rangle $, since in every row and every column of $E$ there is exactly one non-null term, so given $u,v\in \set{0,1}^W$ it holds that $ \mathrm {supp} (E^tu)= \mathrm{supp} (E^tv)\Leftrightarrow u=v$.
Thus this shows directly that these Kripke frames are tame.
\item[Equivalence relations] All $ \Kripke =(W, \mathcal E )$ with $ \mathcal E $ an equivalence relation are partitioning frames: this is the class of Kripke frames defined by epistemic logic $S5$, that is the logic characterized by the axioms $ \Necessarly p\rightarrow p$ and $ \Possibly p\rightarrow \Necessarly \Possibly p$, see e.g. \cite[Ch. 4]{blackburn}.
\item[Trees] Fix a graph-theoretic tree, and choose a root $\bar w$.
The set of nodes of the tree is partitioned according to the distance of each element $w$ from the root, that is the length of the unique path from $\bar w$ to $w$. A node $w$ at distance $d$ is connected to a single node at distance $d-1$ (if $d>0$) and a set of nodes $L(w)$ at distance $d+1$, $\{ L(w)\mid d(w) =d\} $ being a partition of the nodes at distance $d+1$. It follows that in general undirected trees are not partitioning frames, while directed rooted trees and directed rooted trees with inversed arrows are partitioning frames.
\end{description}
\end{example}

\begin{proposition} \label{geomtame}
If $ \Kripke $ is a finite partitioning frame, then $ \Kripke $ is tame.
\end{proposition}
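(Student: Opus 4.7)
The goal is to prove $V(J) \subseteq \mathrm{range}(\Necessarly)$, since the reverse inclusion $\mathrm{range}(\Necessarly) = V(\widetilde{\mathcal I}) \subseteq V(J)$ follows from $J \subseteq \widetilde{\mathcal I}$, already noted in the paper. So I would fix an arbitrary $b \in V(J)$ --- equivalently, a vector $b \in \{0,1\}^W$ that annihilates every binomial in $\widetilde{\mathcal I_T}$ --- and construct an $a \in \{0,1\}^W$ with $\Necessarly a = b$.

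The first step is to squeeze two concrete consequences out of $b \in V(\widetilde{\mathcal I_T})$ by plugging specific lattice elements into Theorem \ref{mainproposition}(1). If $N(w) = \emptyset$ then the $w$-th row of $E$ vanishes, so $\alpha = e_w$ lies in $\ker E^t$, and the generator $z^{\alpha_+} - z^{\alpha_-} = z_w - 1$ forces $b(w) = 1$. If $N(w) = N(w')$ then the $w$-th and $w'$-th rows of $E$ coincide, so $\alpha = e_w - e_{w'}$ lies in $\ker E^t$, and the generator $z_w - z_{w'}$ forces $b(w) = b(w')$. Thus $b$ is constant along the equivalence classes of worlds sharing a common non-empty neighborhood, and $b \equiv 1$ on the worlds with empty neighborhood.

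Now I would invoke the partitioning hypothesis: the collection of distinct non-empty neighborhoods $\{N_1, \ldots, N_m\}$ is pairwise disjoint, because $N_i \cap N_j \neq \emptyset$ would force $N_i = N_j$ by \eqref{eqpartition}. Write $b_i$ for the common value of $b$ on all worlds $w$ with $N(w) = N_i$. Define $a \in \{0,1\}^W$ by setting $a \equiv 1$ on $N_i$ whenever $b_i = 1$, $a \equiv 0$ on $N_i$ whenever $b_i = 0$, and $a(w') = 0$ (say) for every $w' \notin \bigcup_i N_i$. Disjointness guarantees this prescription is well-defined. Checking that $\Necessarly a = b$ is then immediate: for $w$ with $N(w) = \emptyset$ both sides equal $1$; for $w$ with $N(w) = N_i$,
\[
\Necessarly a(w) = \prod_{w' \in N_i} a(w') = b_i = b(w).
\]

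There is essentially no obstacle here beyond the bookkeeping: the work is concentrated in observing that the toric binomials corresponding to $\alpha = e_w$ and $\alpha = e_w - e_{w'}$ already encode all the compatibility needed on $b$, and then exploiting disjointness of distinct neighborhoods to reconstruct $a$ fibrewise. The partitioning hypothesis is used precisely at the reconstruction step, to ensure the independent choices on the $N_i$ do not clash.
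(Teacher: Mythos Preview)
Your proof is correct and follows essentially the same approach as the paper's: take $b\in V(J)$, observe it is constant on worlds sharing a neighborhood, and reconstruct a preimage $a$ by setting $a$ constant on each neighborhood block. Your version is in fact more explicit than the paper's, since you spell out via the lattice vectors $\alpha=e_w$ and $\alpha=e_w-e_{w'}$ exactly which toric binomials force $b(w)=1$ when $N(w)=\emptyset$ and $b(w)=b(w')$ when $N(w)=N(w')$; the paper asserts the latter without justification and does not explicitly treat the empty-neighborhood case.
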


\begin{proof}
It is enough to show that $V(J)\subseteq V( \widetilde{ \mathcal I })$.
Assume that $b\in V(J)$.
This implies that $b(w)\in \set{0,1} $ for all $w\in W$, and that $b(w_0)=b(w_1)$ whenever $N(w_0)=N(w_1)$.
So define $a\in \set{0,1}^K$ by letting $a(w')=b(w)$, for any $w$ such that $w'\in N(w)$, and defining $a(w')$ arbitrarily if $w'\in W\setminus\bigcup_{w\in W}N(w)$.
Then $b= \Necessarly a\in \mathrm{range} ( \Necessarly )=V ( \widetilde{ \mathcal I } )$.
\end{proof}

Observe that being a partitioning frame is not a necessary condition for being tame: see Examples \ref{aftercorollary} and \ref{exampleleven}.

Finally, let us point out that in this paper we focused on $ \Necessarly $, but the operator $\Possibly$ can be described with the same technique. In fact,
\begin{equation*}
\Possibly a_w =1 - \prod_{w' \in N(w)} (1 - t_{w'})
\end{equation*}
and one can use the theory above through the following substitutions:
\[
t'_w \mapsto 1-t_w \qquad  z'_w \mapsto 1-z_w\ .
\]
Notice that the substitution $t'_w \mapsto 1-t_w$ together with $t_w^2-t_w=0$ implies that $(t'_w)^2-t'_w=0$.

\section{Some explicit examples} \label{examples}
As a first example, we describe a tame Kripke frame that is not a partitioning frame.

\begin{example} \label{aftercorollary}
Let $W= \set{1,2,3} $, with incidence matrix
\[
E= \bordermatrix[{[}{]}]{& 1 & 2 & 3 \cr
1 & 1 & 1 & 0 \cr
2 & 0 & 1 & 1 \cr
3 & 0 & 0 & 0} \ .
\]
Then $ \widetilde{ \mathcal I }=J=\langle z_3-1,z_1^2-z_1,z_2^2-z_2,z_3^2-z_3\rangle =\langle z_3-1,z_1^2-z_1,z_2^2-z_2\rangle $.
\end{example}

The following example of a partitioning frame provides a simple illustration of the procedure discussed in the previous section.

\begin{example}[The symmetric $4$-cycle] \label{byhands}
  Consider the Kripke frame $\Kripke = ( \set{1,2,3,4}, \mathcal E)$, with adjacency matrix
  \begin{equation*}
E = \bordermatrix[{[}{]}]{& 1 & 2 & 3 & 4 \cr
1 & 0 & 1 & 0 & 1 \cr
2 & 1 & 0 & 1 & 0 \cr
3 & 0 & 1 & 0 & 1 \cr
4 & 1 & 0 & 1 & 0} \ .
\end{equation*}
Notice that this Kripke frame is not a disjoint union of cycles, so by Corollary \ref{injsurj} the range of $ \Necessarly a$ is a proper subvariety of $ \set{0,1}^4$.

Eq.~\eqref{eq:modalistoric} becomes
\begin{equation*}
\left\{  \begin{aligned}
    \Necessarly a(1) &= a(2)a(4) \\
   \Necessarly a(2) &= a(1)a(3) \\
   \Necessarly a(3) &= a(2)a(4) \\
   \Necessarly a(4) &= a(1)a(3) \\
  \end{aligned} \right. \ .
\end{equation*}

Let $\gamma = (\gamma(1),\gamma(2),\gamma(3),\gamma(4))$ be a non-zero integer vector such that $E^t\gamma = 0$, so that
\begin{equation*}
  \left\{
    \begin{aligned}
      0 &= \gamma(2)+\gamma(4) \\
      0 &= \gamma(1)+\gamma(3) \\
    \end{aligned}
\right. \ .
\end{equation*}

The solutions are all vectors of the form $(u,v,-u,-v)$ for $u,v\in \Z $.
The vectors
\begin{equation*}
(1,0,-1,0),(0,1,0,-1)
\end{equation*}
generate with integer coefficients all such solutions, and have disjoint supports.
These vectors can be split as
\begin{equation*}
  (1,0,0,0) - (0,0,1,0) = (1,0,-1,0),\quad (0,1,0,0)-(0,0,0,1)=(0,1,0,-1)\ .
\end{equation*}
So the ideal $ \widetilde { \mathcal I_T} $ is generated by the binomials
\begin{equation*} 
  z_1-z_3,\quad z_2-z_4.
\end{equation*}

To generate $ \widetilde{ \mathcal I } $, in this case it is enough to add the binomials $z_w^2-z_w$, since $ \Kripke $ is a partitioning frame, so it is tame.
In conclusion the range of the necessitation operator $\Necessarly$ consists of the 4 points
\begin{equation*}
  (0,0,0,0), (1,0,1,0), (0,1,0,1), (1,1,1,1)\ .
\end{equation*}
\end{example}

Since the toric ideal $ \widetilde{ \mathcal I_T} $ associated to the adjacency matrix $E$ is a subset of the relevant binomial ideal $\widetilde{\mathcal I}$, this implies that a subset of the generators can be computed through specialized software for toric ideals (for instance, with {\tt 4ti2}, see \cite{4ti2}). Such a computation exploits the special structure of toric ideals and therefore makes it possible some computations also for large frames, where the elimination technique fails.

When the equality $ \widetilde{ \mathcal I }=J$ fails, that is when the Kripke frame is not tame, the computation of $ \widetilde{ \mathcal I }$ is more complex.
We present here some examples where the computation of a Gr\"obner basis for such an ideal has been carried out in {C}o{C}o{A}, \cite{cocoa}.


\begin{example}[The reflexive symmetric 4-cycle] \label{exampleten}
Let us now consider the Kripke frame displayed in Fig.~\ref{figsym4cycle} together with its adjacency matrix.
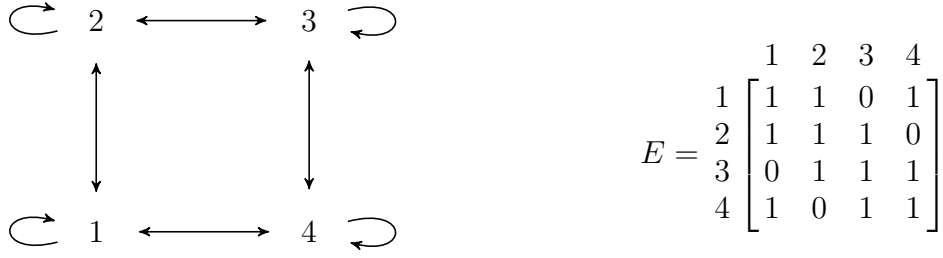
\begin{figure}
\begin{minipage}{0.48\textwidth}
\begin{center}
\begin{tikzpicture}[<->,>=stealth',shorten >=1pt,auto,node distance=2.8cm,
                    semithick,scale=0.8]
  \tikzstyle{every state}=[fill=white,draw=none,text=black]

  \node[state] (n1)               {$1$};
  \node[state]         (n2) [above of=n1] {$2$};
  \node[state]         (n3) [right of=n2] {$3$};
  \node[state]         (n4) [below of=n3] {$4$};
  \path (n1) edge              node {} (n2)
             edge [loop left]  node {} (n1)
        (n2) edge              node {} (n3)
             edge [loop left]  node {} (n2)
        (n3) edge              node {} (n4)
             edge [loop right] node {} (n3)
        (n4) edge              node {} (n1)
             edge [loop right] node {} (n4);
\end{tikzpicture}
\end{center}
\end{minipage}
\begin{minipage}{0.48\textwidth}
\begin{center}
\begin{equation*}
E = \bordermatrix[{[}{]}]{& 1 & 2 & 3 & 4 \cr
1 & 1 & 1 & 0 & 1 \cr
2 & 1 & 1 & 1 & 0 \cr
3 & 0 & 1 & 1 & 1 \cr
4 & 1 & 0 & 1 & 1}
\end{equation*}
\end{center}
\end{minipage}
\caption{The reflexive symmetric $4$-cycle and its adjacency matrix.} \label{figsym4cycle}
\end{figure}

This is the reflexive version of Example \ref{byhands}. The corresponding elimination ideal is generated by the Gr\"obner basis given by:
\begin{equation*}
z_2z_3 - z_3z_4, \ -z_2z_4 + z_3z_4, \ -z_1z_3 + z_3z_4, \ -z_1z_4 + z_3z_4, \ -z_1z_2 + z_3z_4
\end{equation*}
plus the binomials $z_i^2-z_i$, $i=1,\ldots, 4$. In this example the toric ideal $\widetilde{ \mathcal I_T}$ is the null ideal, since the adjacency matrix is non-singular.
So this Kripke frame is not tame.
\end{example}

\begin{example}[The reflexive oriented $4$-cycle] \label{exampleleven}
We slightly modify the adjacency matrix above by choosing an orientation in the $4$-cycle. The Kripke frame and the adjacency matrix are displayed in Fig.~\ref{figor4cycle}.
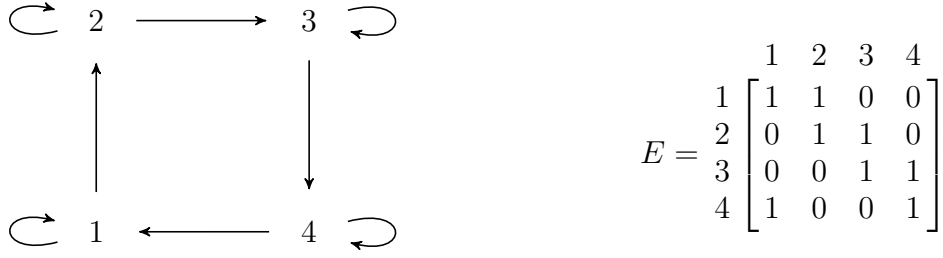
\begin{figure}
\begin{minipage}{0.48\textwidth}
\begin{center}
\begin{tikzpicture}[->,>=stealth',shorten >=1pt,auto,node distance=2.8cm,
                    semithick,scale=0.8]
  \tikzstyle{every state}=[fill=white,draw=none,text=black]

  \node[state] (n1)               {$1$};
  \node[state]         (n2) [above of=n1] {$2$};
  \node[state]         (n3) [right of=n2] {$3$};
  \node[state]         (n4) [below of=n3] {$4$};
  \path (n1) edge              node {} (n2)
             edge [loop left]  node {} (n1)
        (n2) edge              node {} (n3)
             edge [loop left]  node {} (n2)
        (n3) edge              node {} (n4)
             edge [loop right] node {} (n3)
        (n4) edge              node {} (n1)
             edge [loop right] node {} (n4);
\end{tikzpicture}
\end{center}
\end{minipage}
\begin{minipage}{0.48\textwidth}
\begin{center}
  \begin{equation*}
E = \bordermatrix[{[}{]}]{& 1 & 2 & 3 & 4 \cr
1 & 1 & 1 & 0 & 0 \cr
2 & 0 & 1 & 1 & 0 \cr
3 & 0 & 0 & 1 & 1 \cr
4 & 1 & 0 & 0 & 1}
\end{equation*}
\end{center}
\end{minipage}
\caption{The reflexive oriented $4$-cycle and its adjacency matrix.} \label{figor4cycle}
\end{figure}

In this case the elimination ideal $ \widetilde{ \mathcal I }$ is generated by the Gr\"obner basis given by:
\begin{equation*}
z_1z_3 - z_2z_4, \  -z_2z_3z_4 + z_2z_4, \ z_1z_2z_4 - z_2z_4
\end{equation*}
plus the binomials $z_i^2-z_i$, $i=1,\ldots, 4$. Here, the binomial $z_1z_3 - z_2z_4$ belongs to the toric ideal $\widetilde{ \mathcal I_T}$ and the toric ideal is actually a principal ideal generated by this binomial.

In fact $ \widetilde{ \mathcal I } =J$, since the polynomials $-z_2z_3z_4 + z_2z_4,z_1z_2z_4 - z_2z_4$ are redundant generators, as they belong to
\[
J=\langle z_1^2-z_1,z_2^2-z_2,z_3^2-z_3,z_4^2-z_4,z_1z_3-z_2z_4\rangle
\]
Thus this Kripke frame is tame.
\end{example}

\begin{example} \label{exampletwelve}
Let us consider the reflexive frame displayed in Fig.~\ref{bigfig}.
In this tree-like structure, the value of $\Necessarly a$ at a given world depends on the value of $a$ at the worlds that come from the same parent or are immediate descendants.
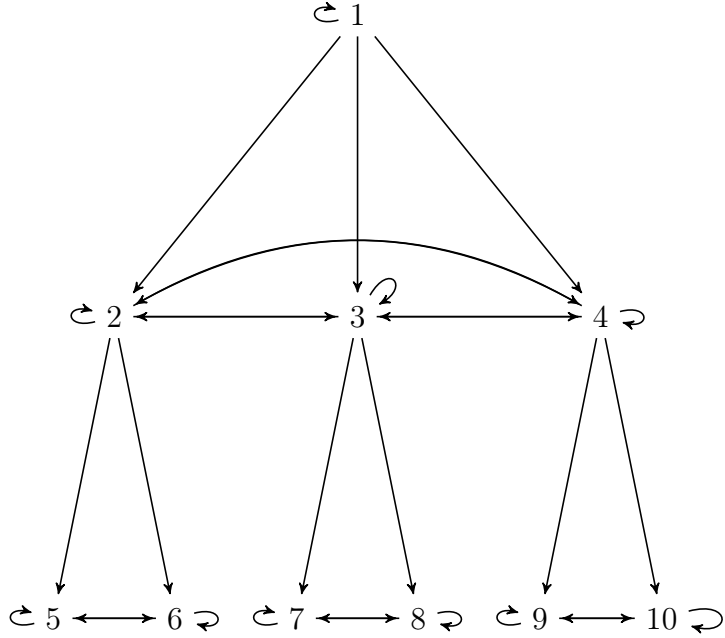
\begin{figure}
\begin{center}
\begin{tikzpicture}[->,>=stealth',shorten >=1pt,auto,node distance=2.8cm,
                    semithick,scale=0.8]
  \tikzstyle{every state}=[fill=white,draw=none,text=black]

   \node (n1) at (6,10) {1};
  \node (n2) at (2,5) {2};
  \node (n3) at (6,5) {3};
  \node (n4) at (10,5) {4};
   \node (n5) at (1,0) {5};
   \node (n6) at (3,0) {6};
   \node (n7) at (5,0) {7};
   \node (n8) at (7,0) {8};
   \node (n9) at (9,0) {9};
  \node (n10) at (11,0) {10};
  \path (n1) edge              node {} (n2)
             edge              node {} (n3)
             edge              node {} (n4)
             edge [loop left]  node {} (n1)
        (n2) edge              node {} (n3)
             edge [bend left]  node {} (n4)
             edge              node {} (n5)
             edge              node {} (n6)
             edge [loop left]  node {} (n2)
        (n3) edge              node {} (n2)
             edge              node {} (n4)
             edge              node {} (n7)
             edge              node {} (n8)
             edge [in=30,out=60,loop] node {} (n3)
        (n4) edge              node {} (n3)
             edge [bend right] node {} (n2)
             edge              node {} (n9)
             edge              node {} (n10)
             edge [loop right] node {} (n4)
        (n5) edge              node {} (n6)
             edge [loop left]  node {} (n5)
        (n6) edge              node {} (n5)
             edge [loop right] node {} (n6)
        (n7) edge              node {} (n8)
             edge [loop left]  node {} (n7)
        (n8) edge              node {} (n7)
             edge [loop right] node {} (n8)
        (n9) edge              node {} (n10)
             edge [loop left]  node {} (n9)
       (n10) edge              node {} (n9)
             edge [loop right] node {} (n10);
\end{tikzpicture}
\end{center}
\caption{The frame for Ex.~\ref{exampletwelve}} \label{bigfig}
\end{figure}
Apart from the binomials $z_i^2-z_i$, $i=1,\ldots, 10$, the binomial ideal $\widetilde { \mathcal I }$ is generated by 7 binomials:
\begin{itemize}
\item 5 linear binomials, the generators of the toric ideal $\widetilde{ \mathcal I_T}$:
\[
z_9 - z_{10}, \ z_7 - z_8,\  z_5 - z_6, \ z_3 - z_4,\  z_2 - z_4;
\]
\item 2 further reducible binomials not belonging to the toric ideal:
\[
-z_1z_6z_8z_{10} + z_4z_6z_8z_{10}, \ z_1z_4 - z_4 \, .
\]
\end{itemize}
\end{example}

\section{Final remarks and some questions} \label{fraq}

One of the original motivations that led C.S.~Lewis to the study of modal logic was philosophical. Namely, he was interested in finding a stronger definition of logical implication. He came out with various definitions among which the most popular today is the following: ``$p$ strongly implies $q$'' means $\Necessarly(p \rightarrow q)$.

The algebraic presentation of Kripke frame semantics we use in this paper provides a way to express such logical statements in polynomial algebra e.g., strong implication at world $w$ becomes the polynomial  $\prod_{w'\in N(w)} (1 - p(w') + p(w')q(w'))$. Computationally speaking, the algebraic presentation has some advantage with respect to the logical notation in that it opens up the opportunity to take advantage of another well-developed theory, Polynomial Commutative Algebra. In case of a finite Kripke frame, the relevant polynomial algebra enjoys a finiteness property because of the finite generation of polynomial ideals. Moreover, some of the algebraic ideals that are associated with the finite Kripke frame are of a special kind; namely, toric ideals. The combinatorial features of the theory of toric ideals produce a special type of computational algorithms that are currently implemented in symbolic software. While the computational complexity of such algorithms is very high, it is nevertheless useful to use such tools to study model examples. In this paper, we put together the two perspectives, Modal Logic and Computational Commutative Algebra. Though some interplay between the two subjects has already been exploited in the past (see for example \cite{chazarain1991}), as far as we know there is no much literature about this.

Thus the purpose of this paper can be seen as two-fold.
From the modal theoretic viewpoint, we think that it is interesting to develop computational tools describing the behaviour of objects studied in the field, in our case what kind of truth set a proposition of the form $ \Necessarly a$ can have.
On a more general level, although the study of the interplay between these two theories is in its early stages, the fact that several structures in algebraic statistics are described by toric ideals suggests that there may exist deeper connections between them that deserve to be investigated.

We propose here some basic questions that could help in starting such an investigation.
Among the early results obtained from the use of a Kripke frame $(W,\mathcal E)$ to define the semantics, there was a classification of different logical axiom systems according to the properties of the relation $\mathcal E$. Our approach is similar, in the sense we discuss the properties of the ``necessary proposition'' that is, the range of the operator $\Necessarly$, with the aid of a finite set of generators of a polynomial ideal and, in this way, we obtain properties of the Kripke frame.
As we applied the full force of the algebraic theory with its specialized notions of binomial ideal, toric ideal, radical ideal, and elimination ideal to obtain our results, two questions --- somehow dual to each other --- arise immediately.

\begin{question}
Can the results on the structure of $ \mathrm{range} ( \Necessarly )$ be obtained by purely logical means?
\end{question}

\begin{question}
Can the tools from polynomial algebras be applied to other problems coming from modal logic?
\end{question}

In section \ref{sec:commutative-algebra} we have shown that finite partitioning frames are tame, but a nice characterisation of tame frames is still lacking.
Moreover, while our notion of tameness seems to provide a reasonable simplification for the description of $ \mathrm{range} ( \Necessarly )$ via polynomial equations, there might be other ways to determine a simpler set of conditions than those provided by the full ideal $ \widetilde { \mathcal I } $.

\begin{question}
Is there a nice characterisation of tame frames?
\end{question}

\begin{question} \label{qst}
What are other classes of Kripke frames, different from the tame ones, that admit a simplified set of equations for $ \mathrm{range} ( \Necessarly )$?
\end{question}

Notice that to get our results, we relied on the combinatorial structure of the Kripke frames.
It would be interesting to find a semantical treatment.

\begin{question}
Is it possible to characterise tame Kripke frames, or Kripke frames that are simple in the sense of Question \ref{qst}, as those frames that satisfy a given set of axioms of modal logic?
\end{question}

We close this discussion with two general remarks.

As observed above, the techniques presented here rely on the finiteness of the Kripke frame $ \Kripke $ to produce the equations for the range of the modal operator $ \Necessarly $ using algorithms and tools from Algebraic Statistics.
We do not know whether the methods discussed in this paper can be adapted to yield useful information about infinite $ \Kripke $.

Finally, we point out that there are open questions also from the side of Algebraic Statistics. In Statistics, graphs like that in Ex.~\ref{byhands} represent graphical models where each node is a random variable and the edges account for the conditional independence statements among the variables. In the case of discrete random variables, the relevant probability models are toric varieties described by binomials (an example with four variables is extensively discussed in \cite[Ch. 6]{pistone|riccomagno|wynn:2001}).
Therefore a deeper investigation of the connections between the two fields could be fruitful from the statistical viewpoint as well, in particular with regard to Bayesian networks and causal inference.

\section*{Acknowledgements}
The authors would like to thank G. D'Agostino (University of Udine) and C. Bocci (University of Siena) for their criticism and comments on some earlier drafts of the paper.
R. Camerlo wishes to thank the \'Equipe de logique of the Universit\'e de Lausanne, where he was visiting while part of this research was carried out.
G. Pistone acknowledges the support of de Castro Statistics and of Collegio Carlo Alberto.
R. Camerlo is member of GNSAGA-INdAM; G. Pistone and F. Rapallo are members of GNAMPA-INdAM. This research has a financial support of the Universit\`a del Piemonte Orientale.

\end{document}
